\newtheorem{theorem}{\bf Theorem}
\newtheorem{proposition}[theorem]{\bf Proposition}
\newtheorem{question}[theorem]{\bf Question}
\theoremstyle{definition}
\newtheorem{definition}[theorem]{\bf Definition}
\newtheorem{remark}[theorem]{\bf Remark}
\newcommand \N{{\mathbb N}}
\newcommand \Z{{\mathbb Z}}
\newcommand \Q{{\mathbb Q}}
\newcommand \R{{\mathbb R}}
\newcommand \A{{\mathbb A}}
\newcommand \PP{{\mathbb P}}
\newcommand \B{{\mathcal B}}
\DeclareMathOperator{\codim}{codim}
\DeclareMathOperator{\OO}{O}
\DeclareMathOperator{\SO}{SO}
\newcommand{\arxiv}[1]{\href{http://arxiv.org/abs/#1}{{\tt arXiv:#1}}}
\begin{document}
\title{\bf{The 4 $\times$ 4 orthostochastic variety}}

\author{Justin Chen}
\address{School of Mathematics, Georgia Institute of Technology,
Atlanta, Georgia, 30332 U.S.A.}
\email{justin.chen@math.gatech.edu}

\author{Papri Dey}
\address{Department of Mathematics, University of Missouri, Columbia, Missouri, 65211 U.S.A.}
\email{pdbdn@missouri.edu}


\subjclass[2010]{14Q15, 14P05, 15B51, 68W30}
\keywords{orthostochastic matrices, real algebraic varieties, numerical algebraic geometry}

\begin{abstract}
Orthostochastic matrices are the entrywise squares of orthogonal matrices, and naturally arise in various contexts, including notably definite symmetric determinantal representations of real polynomials. However, defining equations for the real variety were previously known only for $3 \times 3$ matrices. We study the real variety of $4 \times 4$ orthostochastic matrices, and find a minimal defining set of equations consisting of 6 quintics and 3 octics. The techniques used here involve a wide range of both symbolic and computational methods, in computer algebra and numerical algebraic geometry.
\end{abstract}

\maketitle

\section{Introduction}
A real square matrix is called \textit{orthostochastic} if it is the entrywise square of an orthogonal matrix. Explicitly, $A = (a_{ij}) \in \R^{n \times n}$ is orthostochastic if there exists an orthogonal matrix $V = (v_{ij})$ with $a_{ij} = v_{ij}^2$ for all $i, j = 1, \ldots, n$. It follows immediately that an orthostochastic matrix is doubly stochastic (i.e. has nonnegative entries and all row and column sums equal to 1), as all rows and columns of an orthogonal matrix are unit vectors. 

As an interesting and special class of doubly stochastic matrices, orthostochastic (and their unitary generalization, unistochastic) matrices arise naturally in a number of contexts, including spectral theory \cite{NTU93, Tho69}, convex analysis \cite{AP79, Hor54},
and physics \cite{BEK05, CD08}. More recently -- and of interest in algebraic geometry -- it has been shown that orthostochastic matrices are deeply connected to definite symmetric determinantal representations of real polynomials. Indeed, for hyperbolic plane curves, every monic symmetric determinantal representation arises from certain associated orthostochastic matrices, which yields an effective algorithm \cite{CD19, Dey} for computing such representations for cubic curves. 

It is thus of interest to find intrinsic characterizations of the set of orthostochastic matrices. One approach to this is: by definition, the set of orthostochastic matrices is the image of an algebraic variety (the real orthogonal group) under a polynomial map (coordinate-wise squaring) -- thus the Zariski closure of the image is a real algebraic variety. Our goal then is to find equations for this Zariski closure, which we refer to as \textit{finding equations for the orthostochastic variety}. The equations of the $3 \times 3$ orthostochastic variety are known, which made the computation of determinantal representations of cubic curves possible, but for matrices of size $\ge 4$ no set of equations which define the orthostochastic variety were previously known. In view of this, our main result is:

\begin{theorem} \label{mainThm}
The $4 \times 4$ orthostochastic variety is defined set-theoretically by 6 quintics and 3 octics, which are known explicitly and defined over $\Z$.
\end{theorem}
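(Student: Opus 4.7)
The plan is to realize the orthostochastic variety $X \subset \R^{16}$ as the Zariski closure of the image of the squaring map $\phi \colon \OO(4) \to \R^{16}$ with $\phi(V) = (v_{ij}^{2})$, find candidate defining equations of low degree by symmetry-aided interpolation, and then certify set-theoretically using numerical algebraic geometry. Since $\dim \OO(4) = 6$ and $\phi$ is generically finite, $\dim X = 6$; hence $X$ sits in codimension $3$ inside the $9$-dimensional affine variety of doubly stochastic matrices, and a set of nine defining equations is strictly more than a complete-intersection presentation would require, so a priori some redundancy is to be expected and minimality must be argued separately.

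The first main step is to produce candidate generators of the vanishing ideal. The wreath product $S_{4} \wr \Z/2$ acts on $X$ by row and column permutations together with matrix transpose, so every graded piece $I(X)_{d}$ is a representation of this group. Direct elimination of the sixteen variables $v_{ij}$ from the presentation $V^{\top} V = I$, $a_{ij} = v_{ij}^{2}$ is prohibitive, so instead I would numerically sample many points of $\phi(\OO(4))$ and, for each small $d$, solve the linear system that selects those degree-$d$ monomials on the ambient affine subspace which vanish at all samples. Organizing the output into $S_{4} \wr \Z/2$-orbits and reducing modulo lower-degree generators, I would expect no new generators for $d \le 4$, a single orbit of $6$ quintics at $d = 5$, nothing new at $d = 6, 7$, and one final orbit of $3$ octics at $d = 8$. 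Choosing rational orbit representatives yields equations defined over $\Z$.

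The second main step is verification. Let $Y \subset \R^{16}$ be the zero locus of the nine candidates together with the linear doubly stochastic constraints. Using homotopy continuation I would compute a witness set for each pure-dimensional piece of $Y$ via regeneration, independently compute a witness set for $X$ by tracking generic slices under $\phi$, and match the top-dimensional degrees; a numerical membership test of each $Y$-witness point against the parametric image of $\phi$ then yields equality on top dimension. Lower-dimensional components of $Y$ not lying in $X$ are grouped into $S_{4} \wr \Z/2$-orbits and tested representative by representative. Minimality follows by removing each orbit of generators in turn and verifying numerically that the resulting cut-out set is strictly larger. The principal obstacle is exactly this set-theoretic certification: because $\phi$ is highly non-injective and $X$ is singular along nontrivial strata (for example where entries of $V$ vanish), the scheme $Y$ could a priori carry extraneous components, possibly with real points off $X$. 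Ruling these out requires a careful numerical irreducible decomposition together with symbolic membership tests driven by the symmetry, and I expect this bookkeeping — rather than the discovery of the equations themselves — to constitute the bulk of the technical work.
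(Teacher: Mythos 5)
Your overall outline (interpolation to find low-degree candidates, then numerical algebraic geometry to certify set-theoretic equality) matches the paper in spirit, but it misses the single structural idea that makes the verification go through, and a couple of your planned steps are ones the authors explicitly found to be computationally out of reach.

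First, you expect to extract the octics at $d=8$ by the same interpolation-plus-LLL procedure that produces the quintics. That is infeasible at this scale: $I(Z_4)_8$ is large (already $\dim I(Z_4)_7 = 330$ took about 7 hours and 34\,GB), and the LLL step for degree $5$ alone took roughly 30 hours. The three octics in the theorem are not found by interpolation; they are three of the \emph{naive} octics $C_{1,2}, C_{1,3}, C_{2,3}$, obtained by the explicit square-and-rearrange elimination on pairs of columns (Section 3), with coefficients known in closed form. Recognizing that these pre-fabricated octics can play the role of the degree-8 generators is an input your plan does not have.

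Second, and more fundamentally, your verification strategy is ``compute a witness set / NID for the candidate zero locus $Y$ and rule out extraneous components.'' The paper explicitly reports that this fails: the octics are so large (967 and 6760 terms) that neither symbolic nor numerical irreducible decomposition of the full naive system is practical. The resolution is not to rule out the extraneous components but to \emph{live with them}. The authors run the NID only on the small ideal $J$ of quintics, which is feasible and reveals $31$ extraneous linear components (16 of dimension $4$, 15 of dimension $5$, six of the latter at infinity). Each extraneous component $C$ is a linear space with an explicit matrix interpretation (a $3\times 3 \oplus 1\times 1$ direct sum, or a block-$2\times 2$ circulant pattern). The key lemma is then a \emph{component-by-component identity}: for each such $C$,
\[
(C \cap V(K))_{s=1} = (C \cap Z_4)_{s=1},
\]
so intersecting with the octics does not remove $C$ from $V(J)\cap V(K)$, but it does shrink $C$'s contribution until it lies inside $Z_4$. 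On the $4$-dimensional components this reduces to the $n=3$ case (the octic restricts to the square of the known $3\times 3$ quartic); on the $5$-dimensional components a small symbolic implicitization becomes feasible because the restricted map has only $5$ free parameters. Your proposal, by contrast, implicitly assumes the final cut-out variety should have $Z_4$ as its only component, and plans to certify that by NID of the full system — the very computation the paper reports as intractable. Without the restrict-to-each-linear-component reduction (and the observation that the answer is a set-theoretic containment after dehomogenizing, not the absence of extra components), the argument does not close.

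Two smaller omissions: you work affinely in $\R^{16}$ and never confront the hyperplane at infinity, whereas the paper works projectively in $\PP^9$ and must discard six $5$-dimensional components supported on $s=0$; and your minimality test (``remove an orbit and check the cut-out grows'') is reasonable but the paper's minimality argument actually uses the computed dimensions of $I(Z_4)_6$ and $I(Z_4)_7$ together with the absence of linear/quadratic syzygies on $J$, which is a cleaner certificate.
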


We outline the remainder of the article: in Section 2 we formalize the set-up and notation, and review some basic invariants of the varieties under consideration. In Section 3 we give a procedure for obtaining a naive set of equations which always cut out a superset of the orthostochastic variety, and see how dimension counts give a simple proof of equality in the case $n = 3$. In Section 4 we consider the case $n = 4$, and detail the techniques used to find the equations in \Cref{mainThm}. Section 5 concludes with some remarks and remaining questions.

\section{Setup and basic invariants}
We begin by setting up some notation which will be used in the remainder of the article. We reserve $n$ to denote the size of a square matrix, and all matrices are considered to have real entries. For $n \in \N$, let $\OO(n)$ (resp. $\SO(n)$) denote the group of $n \times n$ orthogonal (resp. special orthogonal) matrices, which is a real algebraic variety. 

Next, the set of doubly stochastic $n \times n$ matrices is defined by the linear conditions $\sum_i a_{ij} = \sum_j a_{ij} = 1$, $a_{ij} \ge 0$, for all $i, j = 1, \ldots, n$. This can be interpreted as saying that an $n \times n$ doubly stochastic matrix is uniquely determined by any $(n-1) \times (n-1)$ submatrix obtained by deleting a row and column. Moreover, recall that the set of $n \times n$ doubly stochastic matrices equals the convex hull of all $n \times n$ permutation matrices, which is the so-called Birkhoff polytope $\B_n$, of dimension $(n-1)^2$. 

We now introduce the varieties in question. Identifying the space of $n \times n$ real matrices with $n^2$-dimensional affine space $\A^{n^2}_{\R}$, we have the coordinate-wise squaring map
\begin{align*}
\phi : \A^{n^2} \to \A^{n^2} \\ (a_{ij}) \mapsto (a_{ij}^2)    
\end{align*}
Restriction to $\OO(n)$ gives a map $\OO(n) \to \A^{n^2}$, whose image lies in $\B_n$ (note that the image of $\phi$ is contained in the non-negative orthant $\A^{n^2}_{\ge 0}$). Next, the coordinate projection $\pi : \A^{n^2} \to \A^{(n-1)^2}$, given by projecting onto the upper-left $(n-1) \times (n-1)$ submatrix, is injective on $\B_n$, so we may compose with $\pi$ to obtain $\pi \circ \phi \big|_{\OO(n)} : \OO(n) \to \A^{(n-1)^2}$. Finally, for both theoretical and practical reasons it is convenient to work in projective space, so taking the projective closure of the image yields the map
\[
\varphi : \OO(n) \to \PP^{(n-1)^2}_{\R}
\]
We set $Z_n := \overline{\varphi(\OO(n))}$, the Zariski-closure of the image of $\varphi$, which is a projective variety in $\PP^{(n-1)^2}_{\R}$. Concretely, we view $Z_n$ as the projective closure of the set of $(n-1) \times (n-1)$ matrices which are the upper-left submatrix of an $n \times n$ orthostochastic matrix. In this way the linear equations which define (the linear span of) $\B_n$ are already accounted for in $Z_n$, and furthermore, the reduction of variables from $n^2$ to $(n-1)^2+1$ will be valuable for computation.

Introducing coordinates, we have that the map $\varphi$ above corresponds algebraically to the ring map
\[F : \R[y_{(1,1)}, \ldots, y_{(n-1,n-1)},s] \to \R[x_{1,1}, \ldots, x_{n,n}]/I_{\OO(n)} \]
\[y_{(i,j)} \mapsto x_{(i,j)}^2 \]
\[s \mapsto 1\]

\noindent Here $\R[x_{1,1}, \ldots, x_{n,n}]/I_{\OO(n)}$ is the affine coordinate ring of $\OO(n)$, and $\R[y_{(1,1)}, \ldots, y_{(n-1,n-1)},s]$ is the homogeneous coordinate ring of $\PP^{(n-1)^2}_\R$. Equations for $Z_n$ thus correspond to homogeneous forms in $\ker F$, and computing these will be our primary goal, since we can then obtain equations for the orthostochastic variety by dehomogenizing with respect to $s$, i.e. setting $s = 1$ (although one caveat arises with the hyperplane at infinity $s = 0$ -- this will be dealt with later).

We next give the dimension and degree of $Z_n$. Note that $\phi$ is a finite map, with general fibers of size $2^{n^2}$, corresponding to sign choices on each of the $n^2$ entries of a potential preimage. This implies that $\dim Z_n = \dim \OO(n)$, which we now recall: a matrix is orthogonal iff for all $i, j$, the dot product of the $i^\text{th}$ and $j^\text{th}$ rows equals $\delta_{ij}$, so
\[
I_{\OO(n)} = \langle \begin{bmatrix} x_{(i,1)} \ldots x_{(i,n)} \end{bmatrix}  \begin{bmatrix} x_{(j,1)} \ldots x_{(j,n)} \end{bmatrix}^T - \delta_{ij} \; \Big| \; 1 \le i \le j \le n \rangle
\]
In fact, these ${n \choose 2} + n = {n+1 \choose 2}$ quadrics form a regular sequence, so $\dim \OO(n) = n^2 - \codim I_{\OO(n)} = n^2 - {n+1 \choose 2} = {n \choose 2}$.

The degree of $Z_n$ is also known (albeit much more recently): the degree of $\SO(n)$ was computed in \cite{BBBKR17}, and in \cite{DGK18}, a general formula for the degree of a coordinate-wise power of a variety is given, and combining these yields:

\begin{proposition}[cf. \cite{BBBKR17}, Theorem 1.1, and \cite{DGK18}, Prop 2.4] \label{degFormula}
For any $n \ge 2$, 
\[\deg Z_n = 2^{(n-1)^2 - {n+1 \choose 2}} \deg \OO(n) = 2^{{n-1 \choose 2}} \det \begin{bmatrix} {2n - 2i - 2j \choose n - 2i} \end{bmatrix}_{i,j=1}^{\lfloor n/2 \rfloor}\]
\end{proposition}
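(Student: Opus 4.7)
The plan is to combine the two cited results. First, I would observe that since the projection $\pi$ is injective on the Birkhoff polytope $\B_n$, it restricts to a birational map on $\phi(\OO(n)) \subseteq \B_n$, so $\deg Z_n = \deg \overline{\phi(\OO(n))}$ after taking projective closures. This reduces the problem to computing the degree of a coordinate-wise square of $\OO(n)$.

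The main step is to apply \cite{DGK18}, Proposition 2.4, which gives the degree of a variety under coordinate-wise $d$-th power as $d^{\dim X} \deg X / |\text{generic fiber}|$. With $d=2$ and $X = \OO(n)$ of dimension $\binom{n}{2}$, the computation reduces to identifying the generic fiber of coordinate-wise squaring on $\OO(n)$. The technical point I would need to verify is: for generic $Q \in \OO(n)$, the orthogonal matrices $Q'$ with $Q'_{ij} = \pm Q_{ij}$ are exactly $\{D_1 Q D_2 : D_1, D_2 \in \{\pm 1\}^n\}$. Writing $\epsilon_{ij} = Q'_{ij}/Q_{ij}$, the row-orthogonality conditions
\[
\sum_j (\epsilon_{ij}\epsilon_{i'j} - 1)\, Q_{ij} Q_{i'j} = 0
\]
combined with genericity of $Q$ force $\epsilon_{ij}\epsilon_{i'j}$ to be independent of $j$ for every pair $(i,i')$, whence $\epsilon_{ij} = \mu_i \nu_j$ for sign vectors $\mu, \nu$. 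Modding out by the $(D_1, D_2) \sim (-D_1, -D_2)$ ambiguity gives a fiber of size $2^{2n-1}$. The first equality $\deg Z_n = 2^{(n-1)^2 - \binom{n+1}{2}} \deg \OO(n)$ then follows from the identity $\binom{n}{2} - (2n-1) = (n-1)^2 - \binom{n+1}{2}$.

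For the second equality, I would substitute the formula for $\deg \OO(n)$ from \cite{BBBKR17}, Theorem 1.1, and collect the remaining powers of $2$ into $2^{\binom{n-1}{2}}$. The main obstacle is pinning down the generic fiber size $2^{2n-1}$; once this is in hand, the rest is formal binomial bookkeeping, and the small case $n = 2$ (where $\deg \OO(2) = 4$, the fiber has $8$ elements, and $Z_2 = \PP^1$ has degree $1$) serves as a useful sanity check on the signs of the exponents.
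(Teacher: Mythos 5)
Your proposal is correct; the paper gives no proof of this proposition (it simply cites the two references), and your argument is exactly the intended combination of \cite{BBBKR17} and \cite{DGK18}, with the key generic-fiber count $2^{2n-1}$ (from the $\{\pm 1\}$ diagonal left/right action modulo the simultaneous sign flip) and the binomial-coefficient bookkeeping both carried out correctly. One small imprecision worth fixing: ``birationality'' of $\pi$ does not by itself preserve degree — what you actually need, and have, is that $\pi$ restricted to the affine span of $\B_n$ is a \emph{linear} isomorphism onto $\A^{(n-1)^2}$, and degree is invariant under linear isomorphisms but not under general birational maps.
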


\begin{remark}
1) Although $I_{\OO(n)}$ is a complete intersection of the ${n+1 \choose 2}$ quadric generators given above, the degree of $\OO(n)$ is not the product of the generator degrees, namely $2^{{n+1 \choose 2}}$, since the quadrics defining $\OO(n)$ are not homogeneous (it is true that the homogenizations of the quadrics define a homogeneous complete intersection of degree $2^{{n+1 \choose 2}}$, but that variety contains many more components besides those arising from $\OO(n)$).

2) Every orthostochastic matrix is in fact the image under $\phi$ of a special orthogonal matrix -- e.g. one may negate the first row without changing the coordinate-wise square. This shows that $Z_n$ is irreducible, being (the projective closure of) an image of an irreducible variety $\SO(n)$.
\end{remark}

For reference, we tabulate the values of $\dim Z_n, \deg Z_n$ for small values of $n$: \\

\begin{center}
\begin{tabular}{|c|c|c|c|c|}
\hline
    $n$ & 2 & 3 & 4 & 5 \\
\hline
    $\dim Z_n$ & 1 & 3 & 6 & 10 \\
\hline
    $\deg Z_n$ & 1 & 4 & 40 & 1536 \\
\hline
\end{tabular}
\end{center}

\section{$n = 3$ and naive equations} \label{section3}
We now review what is known about the defining equations of $Z_n$, for $n \le 3$. For $n = 2$, every doubly stochastic matrix is orthostochastic: indeed, a $2 \times 2$ doubly stochastic matrix is of the form $A = \begin{pmatrix} a & 1 - a \\ 1 - a & a \end{pmatrix}$ for some $0 \le a \le 1$, and writing $a =: \cos^2 \theta$ gives $1 - a = \sin^2 \theta$, so $A$ is the coordinate-wise square of the rotation matrix $\begin{pmatrix} \cos \theta & -\sin \theta \\ \sin \theta & \cos \theta \end{pmatrix} \in \SO(2)$. Thus no equations are needed to define $Z_2 = \PP^1$.

For $n = 3$, the variety $Z_3$ is a $3$-dimensional variety in $\PP^4$, hence is a hypersurface, and is defined by a single equation. We now show how to find this equation, first found in \cite{Nak96}, following the presentation in Section 3 of \cite{CD08}, which will in fact give a set of ``naive" equations for any $n$. Consider a $3 \times 3$ doubly stochastic matrix
\[
A = \begin{bmatrix}
y_{(1,1)} & y_{(1,2)} & y_{(1,3)} \\
y_{(2,1)} & y_{(2,2)} & y_{(2,3)} \\
y_{(3,1)} & y_{(3,2)} & y_{(3,3)}
\end{bmatrix}
\]
In order for $A$ to be orthostochastic, there must exist sign choices $\epsilon_1, \epsilon_2 \in \{ \pm 1 \}$ such that with these sign choices, the entrywise square roots of the first two columns are orthogonal:
\[
\sqrt{y_{(1,1)}} \sqrt{y_{(1,2)}} + \epsilon_1 \sqrt{y_{(2,1)}} \sqrt{y_{(2,2)}} + \epsilon_2 \sqrt{y_{(3,1)}} \sqrt{y_{(3,2)}} = 0
\]
By rearranging the above equation and squaring, we can eliminate one sign choice and all but one square root:
\[
y_{(1,1)} y_{(1,2)} + 2 \epsilon_1 \sqrt{y_{(1,1)}y_{(1,2)}y_{(2,1)}y_{(2,2)}} + y_{(2,1)}y_{(2,2)} = y_{(3,1)}y_{(3,2)}
\]
and another rearrangement and squaring produces a polynomial relation without $\epsilon_i$:
\[
(y_{(1,1)} y_{(1,2)} + y_{(2,1)}y_{(2,2)} - y_{(3,1)}y_{(3,2)})^2 = 4 y_{(1,1)}y_{(1,2)}y_{(2,1)}y_{(2,2)}
\]
and finally, using the fact that $A$ is doubly stochastic, we may express $y_{(3,1)}$ (resp. $y_{(3,2)}$) in terms of $y_{(1,1)}, y_{(2,1)}$ (resp. $y_{(1,2)}, y_{(2,2)}$) and homogenize with respect to $s$ to obtain
\[
(y_{(1,1)} y_{(1,2)} + y_{(2,1)}y_{(2,2)} - (s - y_{(1,1)} - y_{(2,1)})(s - y_{(1,2)} - y_{(2,2)}))^2 = 4 y_{(1,1)}y_{(1,2)}y_{(2,1)}y_{(2,2)}
\]
This defines a degree $4$ hypersurface in $\PP^4$ which contains $Z_3$, and therefore equals $Z_3$ by dimension and degree considerations.

In general, one can perform the same procedure for any pair of columns or rows of an $n \times n$ doubly stochastic matrix, to obtain a set of equations which any $n \times n$ orthostochastic matrix must satisfy:

\begin{definition}
For $1 \le i < j \le n$, let $C_{i,j}$ be the polynomial obtained by eliminating $\epsilon_1, \ldots, \epsilon_{n-1}, y_{(n,i)}, y_{(n,j)}$ from the relations 
\[
\sqrt{y_{(1,i)} y_{(1,j)}} + \sum_{k=2}^n \epsilon_{k-1} \sqrt{y_{(k,i)} y_{(k,j)}} = 0, \quad \epsilon_1^2 = \ldots = \epsilon_{n-1}^2 = 1, \quad y_{(n,i)} = 1 - \sum_{k=1}^{n-1} y_{(k,i)}, \; y_{(n,j)} = 1 - \sum_{k=1}^{n-1} y_{(k,j)}
\]
Note that $C_{i,j}$ is a polynomial of degree $2^{n-1}$ in $\R[y_{(1,1)}, \ldots, y_{(n-1,n-1)},s]$, obtained by repeatedly squaring (and rearranging) the first relation listed $n-1$ times (as in the case of $n = 3$ above), and that $C_{i,j}$ only involves variables in the $i^\text{th}$ and $j^\text{th}$ columns of a generic doubly stochastic matrix $(y_{(i,j)})$. Similarly, by transposing indices we define $R_{i,j}$ which only involves variables in the $i^\text{th}$ and $j^\text{th}$ rows, and refer to $\{ C_{i,j}, R_{i,j} \mid 1 \le i < j \le n \}$ as the set of \textit{naive equations}, of which there are $2 {n \choose 2} = n(n-1)$ equations in total.
\end{definition}

It follows from the discussion above that every $n \times n$ orthostochastic matrix must satisfy the naive equations $C_{i,j}, R_{i,j}$. More precisely, a doubly stochastic matrix $A$ satisfies $C_{i,j} = 0$ if and only if there exist choices of signs $\epsilon_1, \ldots, \epsilon_{n-1}$ such that \textit{with these choices of signs}, the entrywise square roots of the $i^\text{th}$ and $j^\text{th}$ columns of $A$ become orthogonal. A natural question is:

\begin{question}[cf. \cite{CD08}, Remark 3.4] \label{question-consistency}
Do the naive equations define the orthostochastic variety inside $\B_n$? In other words, if a doubly stochastic matrix satisfies $C_{i,j}, R_{i,j} = 0$ for all $i < j$, must it be orthostochastic?
\end{question}

Stated another way: satisfying a single equation $C_{i,j}$ only guarantees sign choices which work for a single pair of columns. \Cref{question-consistency} asks whether existence of \textit{local} sign choices for each pair of columns (and rows), implies existence of a single \textit{global} sign choice which simultaneously makes all pairs of columns orthogonal.

As an example of what could go wrong, it is conceivable that (after fixing sign choices on the first two columns as required by $C_{1,2}$) the sign choices imposed on the third column by $C_{1,3}$ could differ from those imposed by $C_{2,3}$. This turns out not to happen in the case of $n = 3$, as $Z_3$ is defined by the vanishing of (the homogenization of) $C_{1,2}$. However, as noted in \cite{CD08}, this good behavior is indeed special only to small values of $n$, as evidenced by the following proposition (which justifies our use of the term naive):

\begin{proposition} \label{prop-ge6}
For $n \ge 6$, there exists a doubly stochastic matrix which satisfies $C_{i,j}, R_{i,j} = 0$ for all $i < j$, but is not orthostochastic. Thus \Cref{question-consistency} has a negative answer for $n \ge 6$.
\end{proposition}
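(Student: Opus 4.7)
The plan is to exhibit an explicit doubly stochastic matrix $A \in \B_6$ which satisfies all naive equations but is not orthostochastic, and then reduce $n > 6$ to this base case by padding with an identity block. The base case is essentially the counterexample alluded to in \cite{CD08}.

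For $n = 6$, I would take a specific matrix $A$ of this type (lifted from \cite{CD08}), chosen with entries simple enough -- rational or mildly algebraic -- that both checks can be carried out symbolically, but with a structural ``twist'' in its column signs so that local orthogonality choices fail to glue. Verifying $C_{i,j}(A) = 0 = R_{i,j}(A)$ for each of the ${6 \choose 2} = 15$ column pairs and $15$ row pairs reduces to exhibiting, for each such pair, a sign vector $\epsilon \in \{\pm 1\}^6$ with $\sum_{k=1}^{6} \epsilon_k \sqrt{A_{k,i} A_{k,j}} = 0$; this is a routine finite check, built into the construction.

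Showing that $A$ is \emph{not} orthostochastic is the more delicate part. Using the gauge freedom $V \mapsto D_1 V D_2$ with $D_1, D_2$ diagonal $\pm 1$, any putative orthogonal preimage $V$ of $A$ can be normalized so that its first row and first column have all positive entries. The orthogonality of column $1$ with column $j$ (for $j \ge 2$) is then precisely the $C_{1,j}$-relation, which pins the sign vector of column $j$ of $V$ down to a small finite list of candidates. One then has to exhibit a pair of indices $i, j \ge 2$ for which the sign patterns forced on columns $i$ and $j$ by $C_{1,i}$ and $C_{1,j}$ give $\sum_k V_{k,i} V_{k,j} \neq 0$ in every admissible combination -- i.e., a $\Z/2$-consistency obstruction on the graph of column pairs.

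For $n \ge 7$, form $A' := A \oplus I_{n-6}$. Any column or row pair involving an index from the identity block has pairwise products $A'_{k,i} A'_{k,j}$ vanishing except in at most a single coordinate $k$, so the corresponding naive relation is satisfied trivially; pairs confined to the $A$-block inherit the sign choices from the base case. Non-orthostochasticity is also preserved: any orthogonal $V$ with $\phi(V) = A'$ must vanish wherever $A'$ does, which forces $V = V_1 \oplus V_2$ with $V_1$ an orthogonal $6 \times 6$ matrix squaring to $A$, contradicting the choice of $A$. The main obstacle throughout is the non-orthostochasticity verification at $n = 6$: brute enumeration over the $2^{(n-1)^2} = 2^{25}$ normalized sign patterns is finite but uninstructive, so the real content is to distill a short cycle-parity obstruction from the $C_{i,j}$'s that rules out every global sign choice at once.
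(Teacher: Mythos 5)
Your high-level plan — exhibit a non-orthostochastic $6 \times 6$ doubly stochastic matrix satisfying all naive equations, then pad with $I_{n-6}$ — is exactly the paper's strategy, and your padding step (forcing any orthogonal preimage to split as a block direct sum, since orthostochasticity of a direct sum is equivalent to orthostochasticity of each block) is stated correctly. However, the essential content of the base case is left as a gap: you never actually specify the $6 \times 6$ matrix, and you frame the non-orthostochasticity check as "the more delicate part" requiring a cycle-parity obstruction or a $2^{25}$-pattern search that you do not carry out.

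The paper's choice is $A = \tfrac{1}{6}J_6$, the constant matrix, which collapses both verifications to one line each. Every row of $\tfrac{1}{6}J_6$ has identical entries, so the entrywise square roots of any two rows are the same constant vector of length $6$; choosing three signs $+$ and three signs $-$ makes them orthogonal, which shows each $R_{i,j}$ (and by symmetry each $C_{i,j}$) vanishes — no case analysis or construction of a "twist" is needed. And $\tfrac{1}{6}J_6$ is orthostochastic if and only if there is an orthogonal matrix with all entries $\pm\tfrac{1}{\sqrt 6}$, i.e.\ a $6 \times 6$ Hadamard matrix, which is classically known not to exist (Hadamard order must be $1$, $2$, or a multiple of $4$). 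So the obstruction you correctly sense is there gets delegated entirely to the Hadamard non-existence theorem rather than to an ad hoc sign-consistency search. Without making the specific choice of $\tfrac16 J_6$ (or another explicitly verified matrix) and completing the non-orthostochasticity argument, your proposal remains a proof sketch with the key step unestablished.
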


\begin{proof}
Let $A := (\frac{1}{6}J_6) \oplus I_{n-6}$ be block diagonal, where $J_6$ is the $6 \times 6$ matrix of all $1$'s, and $I_{n-6}$ is the identity matrix of size $n - 6$. Then $A$ is doubly stochastic, and it is straightforward to check that $A$ satisfies all $C_{i,j}, R_{i,j}$: since $A$ is symmetric, it suffices to check that $R_{i,j}$ is satisfied, i.e. there exist local sign choices which make the entrywise square roots of the $i^\text{th}$ and $j^\text{th}$ rows orthogonal. If $j > 6$ then the $i^\text{th}$ and $j^\text{th}$ rows are already orthogonal, and if $i < j \le 6$ then choosing all positive signs for the $i^\text{th}$ row and half positive, half negative for the $j^\text{th}$ row shows that $R_{i,j}$ is satisfied. On the other hand, $\frac{1}{6} J_6$ is not orthostochastic, since there does not exist a $6 \times 6$ Hadamard matrix, and therefore $A$ is not orthostochastic, since a direct sum of square matrices is orthostochastic iff each matrix is orthostochastic.
\end{proof}

\begin{remark}
It was noted in \cite{CD08} that if $n \ge 6$ satisfies $n \equiv 2 \pmod 4$, then $J_n$ satisfies $C_{i,j}, R_{i,j} = 0$ but is not orthostochastic, and it was also proven that counterexamples exist for all $n \ge 16$, using Hurwitz-Radon theory. However, the simple explicit argument with direct sums given in \Cref{prop-ge6} seems to have gone unnoticed.
\end{remark}

\section{$n = 4$} \label{section4}
We now arrive at the main focus of this paper, the case $n = 4$. In this section, most of the methods and arguments we use will be primarily computational in nature, so we first remark on the techniques used. 

Computations were performed in Macaulay2 \cite{GS} and Bertini \cite{BHSW06}, and involve a mix of symbolic and numerical methods. All symbolic computations were done in exact arithmetic, using Gr\"{o}bner bases over the rational numbers -- this is possible since all the varieties in question are defined over $\Q$. For the purposes of this article, we regard results obtained by symbolic computation to be on par with those obtained by theoretical proof.

On the other hand, numerical computations were done in floating point, to 53 bits of precision (i.e. standard double-precision floating-point). Results obtained by numerical methods are regarded as correct ``with probability 1": the fact that these programs give reproducible results agreeing with theory in thousands of cases allows us to say with overwhelming confidence that the results obtained in this way are correct. Throughout the section we indicate when a computation was used, as well as the type (symbolic or numerical). We encourage the interested reader to confirm the results of the computations themselves, included in the appendix/auxiliary files.

Returning to the variety at hand: $Z_4$ is a $6$-dimensional variety in $\PP^9$ of degree $40$ (we remark that by coincidence, this is the unique value for $n \ge 2$ for which $\deg Z_n = \deg \SO(n)$). As before, we have the naive equations $C_{i,j}, R_{i,j}$ for $1 \le i < j \le 4$, which constitute $12$ octics whose vanishing locus (after taking projective closure) contains $Z_4$.

\begin{definition}
We set $Y_c := V(\widetilde{C_{i,j}} : 1 \le i < j \le 4)$ and $Y_r := V(\widetilde{R_{i,j}} : 1 \le i < j \le 4)$ as the varieties defined by the homogenizations of $6$ naive octics corresponding to (all pairs of) columns and rows, respectively. We also define $Y := Y_c \cap Y_r$. Note that these are all subvarieties of the same ambient space $\PP^9$ as $Z_4$.
\end{definition}

We know that $Z_4 \subseteq Y$, and in fact the two agree generically:

\begin{proposition} \label{propY}
If $L \subseteq \PP^9$ is a general linear space of codimension $6$, then $Y \cap L$ consists of $40$ points.
\end{proposition}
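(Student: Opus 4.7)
The plan is to verify this by cutting $Y$ with a generic codimension-$6$ linear subspace $L \subseteq \PP^9$ and counting the resulting isolated points, using numerical homotopy continuation. The key starting observation is that $Z_4 \subseteq Y$ with $\dim Z_4 = 6$ and $\deg Z_4 = 40$ (by \Cref{degFormula}), so $Z_4 \cap L$ already contributes $40$ points to $Y \cap L$. Thus the substantive claim is that $Y \cap L$ contains \emph{no additional} points, or equivalently, that $Z_4$ is the unique top-dimensional component of $Y$ and appears with multiplicity $1$.

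First I would set up the system: the $6$ homogenized column octics $\widetilde{C_{i,j}}$ and the $6$ homogenized row octics $\widetilde{R_{i,j}}$ for $1 \le i < j \le 4$, together with $6$ random linear forms $\ell_1, \ldots, \ell_6$ cutting out $L$. This is a system of $18$ equations in $\PP^9$ whose vanishing locus is expected to be $0$-dimensional. I would then solve numerically in Bertini via homotopy continuation. A naive total-degree Bezout bound of $8^{12}$ paths is infeasible, so I would exploit the fact that each $C_{i,j}$ involves only the variables in columns $i, j$ and each $R_{i,j}$ only those in rows $i, j$, arranging the computation with a multihomogeneous start system or a regeneration/equation-by-equation strategy to slash the path count to a tractable size.

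After the numerical run, I would expect exactly $40$ isolated nonsingular endpoints. Combined with the \emph{a priori} lower bound $|Y \cap L| \ge 40$ from $Z_4 \cap L$, this forces equality set-theoretically, and precludes $Y$ from having any $6$-dimensional component other than $Z_4$ (which would otherwise push the count strictly above $40$). In line with the computational methodology declared at the start of \Cref{section4}, reproducibility across independent random choices of $L$ and (optionally) a recomputation at higher precision would certify the conclusion with overwhelming confidence.

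The main obstacle is the scale and structure of the numerical solve: even with multihomogeneous reduction, tracking the resulting paths for an $18$-equation system in $\PP^9$ where every polynomial equation has degree $8$ is where the technical difficulty lies. A secondary concern is ensuring $L$ is sufficiently generic that no endpoint comes from a positive-dimensional component of $Y$ meeting $L$; this is addressed by discarding nonisolated endpoints flagged by the solver and by repeating the experiment with independent random $L$.
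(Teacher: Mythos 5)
Your high-level strategy --- cut $Y$ with a random codimension-$6$ linear $L$, observe that $Z_4 \cap L$ already contributes $40$ points, and then argue that no extra points appear, so $Z_4$ is the unique top-dimensional component --- is the same as the paper's. The difference is purely in how the count $|Y\cap L|=40$ is obtained. You propose a numerical homotopy-continuation solve of the sliced $18$-equation system, with multihomogeneous or regeneration start systems to tame the Bezout count; the paper instead picks $L$ with random \emph{rational} coefficients, takes a minimal presentation of $I(Y)+I(L)$ (collapsing to an ideal generated by $12$ octics in only $4$ variables, since $\PP^9\cap L \cong \PP^3$), and then computes dimension and degree \emph{symbolically} via Gr\"obner bases, getting $0$ and $40$ exactly. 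Two things are worth noting. First, your own feasibility worry is well founded: the paper explicitly reports that numerical methods (in the form of numerical irreducible decomposition) could not handle $I(Y)$ directly, owing to the size of the naive octics (967 and 6760 terms), so the homotopy solve you sketch is exactly the kind of computation that proved intractable; the symbolic route works precisely because the variable reduction to $\PP^3$ makes the Gr\"obner basis small. Second, the authors treat symbolic computation as ``on par with theoretical proof,'' whereas numerical results are only ``correct with probability~1''; your version would downgrade \Cref{propY} to the latter category, which matters since this proposition is otherwise one of the exact ingredients feeding into \Cref{mainThm}. A minor point: your phrase ``appears with multiplicity $1$'' is not needed for the set-theoretic statement as written, though it is a byproduct of the paper's scheme-theoretic degree-$40$ computation.
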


\begin{proof}
Since the claim is for a general linear space $L$, we may show this by direct computation. Choosing a linear slice $L$ at random (i.e. $6$ linear forms in $10$ variables with random rational coefficients), upon computing a minimal presentation of the ideal of $Y \cap L$ we arrive at an ideal generated by $12$ octics in $4$ variables, whose dimension and degree can easily be computed symbolically to be 0 and 40, respectively.
\end{proof}

\Cref{propY} shows that $Y$ has the same dimension and degree as $Z_4$. In particular, $Z_4$ (being irreducible) is the unique top-dimensional component of $Y$, and so $Y \ne Z_4$ if and only if $Y$ contains additional components of lower dimension. Thus, if $I(Y) = (C_{i,j}, R_{i,j})$ denotes the ideal generated by the 12 naive octics, then showing that $I(Y)$ has only one minimal prime (or equivalently, that the radical of $I(Y)$ is prime) would imply $Y = Z_4$.

However, the basic issue is that $I(Y)$ is too large to handle directly: half of the octics have 967 terms, and the other half have 6760 terms. Neither symbolic nor numerical methods (via numerical irreducible decomposition) were able to determine the number of minimal primes of $I(Y)$. This motivates our search for other, ``smaller" equations of $Z_4$.

How does one find equations for a parametrized variety? This procedure is known as implicitization, and amounts to computing the kernel of a ring map. Classically, this is accomplished with Gr\"{o}bner bases, but in the $n = 4$ case, this is infeasible (as of yet). Thus, we try a numerical approach, using \textit{interpolation} (cf. Theorem 3 in \cite{CK18}). The method is as follows: since $Z_4$ is a projective variety, its defining ideal $I(Z_4)$ has homogeous generators, so we may search degree by degree. Fixing a degree $d$, we may find a basis of $I(Z_4)_d$ (the degree $d$ part of $I(Z_4)$) by sampling a large number of points on $Z_4$, evaluating all monomials of degree $d$ in $\PP^9$ at these points, and then computing the numerical kernel of the resulting matrix (with rows indexed by points, and columns indexed by monomials). The kernel vectors are then coefficients of degree $d$ forms (linearly independent over $\R$) which vanish at all of the sampled points, and if the number of points is large, one expects such a form to vanish on all of $Z_4$. There are no linear forms in $I(Z_4)$ (as these are already accounted for by restricting to $\B_4$), and similarly we find no forms of degrees $2, 3$, and $4$. However, in degree $5$ we find:

\begin{proposition} \label{prop-quintics}
The space of quintics in $I(Z_4)$ is $6$-dimensional, with explicitly known basis.
\end{proposition}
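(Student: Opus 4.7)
The plan is to implement the interpolation procedure described in the paragraph preceding the statement, specialized to degree $d = 5$. First, produce a large set of sample points on $Z_4$: draw random matrices $V \in \SO(4)$ (for instance, by $QR$-factoring a matrix with independent Gaussian entries), form the entrywise square $\phi(V)$, and take the upper-left $3 \times 3$ submatrix with $s = 1$. Since $Z_4 = \overline{\varphi(\OO(4))}$ is irreducible of dimension $6$, the points obtained in this way are generic on $Z_4$. The number of monomials of degree $5$ in the ten homogeneous coordinates of $\PP^9$ is $\binom{14}{9} = 2002$, so we collect $N > 2002$ sample points and assemble the $N \times 2002$ evaluation matrix $M$ whose columns are indexed by the degree-$5$ monomials.

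Next, compute the right null space of $M$ numerically via singular value decomposition. Each kernel vector is the coefficient vector of a degree-$5$ form that vanishes on the sample, hence a candidate quintic in $I(Z_4)_5$. Empirically this null space has dimension exactly $6$; applying a rationalization step (for example, lattice reduction on the singular vectors) produces six explicit candidate quintics $q_1, \ldots, q_6 \in \Z[y_{(1,1)}, \ldots, y_{(3,3)}, s]$. Linear independence over $\Q$ is immediate from their integer coefficient matrix.

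To upgrade these candidates into a certified basis, each $q_i$ is verified symbolically to lie in $I(Z_4)$: substituting $y_{(i,j)} \mapsto x_{(i,j)}^2$, setting $s \mapsto 1$, and reducing modulo the quadric generators of $I_{\OO(4)}$ in $\Q[x_{1,1}, \ldots, x_{4,4}]$ should return zero, which is a tractable Gr\"obner basis computation since $I_{\OO(4)}$ is a complete intersection of ten explicit quadrics. The matching upper bound $\dim I(Z_4)_5 \le 6$ follows from the fact that, for a generic sample on the irreducible variety $Z_4$, the rank of $M$ equals $2002 - \dim I(Z_4)_5$; this is certified by recomputing the rank symbolically on a moderate batch of rational sample points, or equivalently by observing that adding more samples past the $6$-dimensional plateau leaves the nullity unchanged.

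The principal obstacle is this upper bound rather than the existence of the six quintics, since interpolation alone guarantees vanishing only on a finite sample and one must exclude an undetected seventh quintic. A useful secondary check exploits the evident $S_4 \times S_4 \rtimes \Z/2$ symmetry of $Z_4$ (independent row and column permutations of the underlying orthogonal matrix, together with transposition): because the last row and column of a doubly stochastic matrix are linear in the remaining entries and $s$, this group acts linearly on $\PP^9$ and therefore on $I(Z_4)_5$, and one verifies that the six $q_i$ form a single orbit under this action, which both predicts the count $6$ and provides a structural sanity check on the computed basis.
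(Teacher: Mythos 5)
Your core argument follows the paper's approach exactly: numerically sample points on $Z_4$, interpolate in degree $5$ to find a $6$-dimensional kernel, extract integer coefficients via LLL, and certify membership of the resulting quintics in $I(Z_4)$ by a symbolic normal-form computation modulo $I_{\OO(4)}$. The sampling method differs cosmetically (Gaussian QR versus the Cayley parameterization), and your suggestion to certify the upper bound $\dim I(Z_4)_5 \le 6$ by repeating the rank computation symbolically over $\Q$ on rational sample points is a mild strengthening the paper does not pursue (the paper accepts the numerical nullity directly). One caution on your secondary check: you assert that the six quintics form a single orbit under the row/column-permutation-and-transpose symmetry of $Z_4$ and that this ``predicts the count $6$.'' Neither part is justified -- a basis of a $6$-dimensional invariant subspace has no reason to be a group orbit, LLL has no reason to output an orbit basis even if one exists, and knowing the orbit size $6$ in advance would require knowing the stabilizer, which is circular. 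The paper in fact records as an open question the lack of any combinatorial description of these quintics, so this sanity check, as stated, would likely fail and should not be leaned on.
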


\begin{proof}
Note that sampling points on $Z_4$ is easily accomplished: one can generate a random $4 \times 4$ orthogonal matrix, e.g. via the Cayley parameterization of $\SO(n)$, and then apply the map $\varphi$ to obtain a point on $Z_4$. In total, if $A$ is a $4 \times 4$ matrix with entries chosen at random, then the final output of $A \mapsto B := \frac{1}{2}(A - A^T) \mapsto C := \frac{I-B}{I+B} \mapsto \varphi(C)$ is a (random) point on $Z_4$.

With this, determining numerically the dimension of $I(Z_4)_5$ by the method outlined above is straightforward and relatively quick, using the Macaulay2 package \texttt{NumericalImplicitization}, which returns an answer of $6$ within $1$ minute. However, this is only a numerical computation, which provides approximate quintics with floating-point coefficients.

To obtain explicit quintics over $\Z$ from the approximate quintics, we use the LLL algorithm, again implemented in \texttt{NumericalImplicitization} (which in turn calls the native LLL implementation in Macaulay2). This calculation is significantly longer, taking around $30$ hours to finish, but results in a $2002 \times 6$ integer matrix, whose columns are coefficients of degree 5 forms in $\PP^9$. Once these quintics are known, they are easily checked to be in $I(Z_4)$: verifying symbolically that the integer quintics lie in $\ker F$ takes $< 1$ second.
\end{proof}

\begin{definition} \label{def:quintics}
We set $J$ to be the ideal generated by the $6$ quintics found in \Cref{prop-quintics}, and let $X := V(J)$ be the variety they define.
\end{definition}

Although still too large to comfortably reproduce here, the quintics in $J$ are significantly more manageable than the octics $C_{i,j}, R_{i,j}$: 4 of the quintics have 284 terms, and the most complicated involves 454 terms. In fact a Gr\"obner basis of $J$ can be computed (taking around 8 hours), which yields $\dim X = 6$, $\deg X = 40$ (the same as $Z_4$). Since we also know $Z_4 \subseteq X$ as well, we may again test whether equality holds by computing the number of minimal primes of $J$, i.e. irreducible components of $X$. Although symbolic computation of minimal primes of $J$ is still too slow to be practical, the key difference from the previous case with $Y$ is that $X$ is small enough for a numerical irreducible decomposition to be feasible. The result, though initially negative, is immediately promising and leads to a resolution of the problem.

\begin{proposition} \label{prop-NID}
The numerical irreducible decomposition of $X$ consists of:
\begin{enumerate}
\item $16$ components of dimension $4$, each of degree $1$,
\item $15$ components of dimension $5$, each of degree $1$, 
\item $1$ component of dimension $6$, of degree $40$.
\end{enumerate}
\end{proposition}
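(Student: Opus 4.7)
The plan is to carry out a numerical irreducible decomposition (NID) of $X = V(J) \subseteq \PP^9$ directly, using the software Bertini \cite{BHSW06}. Since $J$ is explicitly known (six quintics with at most $454$ terms), and the authors have already indicated that the computation is feasible for $X$ (in contrast to $Y$), the proof is essentially a matter of running the algorithm and reporting its output. The key output of NID is a list of witness sets, one per component, each recording the dimension and the number of witness points (i.e.\ the degree) of the corresponding irreducible component.

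First, I would compute a witness superset at each dimension $d \in \{0,1,\ldots,9\}$ by intersecting $X$ with a generic linear subspace of complementary codimension and tracking solutions via standard homotopy continuation. Next, I would apply the junk-point filtering step (the local dimension test) to discard points that actually lie on higher-dimensional components than $d$. Then the witness points remaining at each dimension would be partitioned into irreducible components using monodromy loops, with the partition validated by the trace test. Reading off the final tally, one would expect to see exactly the numbers claimed: sixteen witness points at dimension $4$ that separate into sixteen monodromy orbits of size $1$, fifteen at dimension $5$ that separate into fifteen orbits of size $1$, and forty at dimension $6$ that all lie in a single monodromy orbit (the last being forced, since $Z_4 \subseteq X$ is irreducible of dimension $6$ and degree $40$ by \Cref{degFormula}, so any top-dimensional witness points must all belong to the $Z_4$ component).

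The main obstacle is practical rather than conceptual: while NID on $J$ is advertised as feasible, the low-dimensional components (dimensions $4$ and $5$) are embedded in the higher-dimensional ones, so the local dimension test and junk-point removal must be executed carefully to avoid false positives or negatives at each dimension stratum. In particular, at dimension $6$ one must certify that the $40$ tracked points are not artefacts of excess paths from $\deg J \gg 40$ in a total-degree homotopy; this is exactly what the trace test certifies. The fact that $\dim X = 6$ and $\deg X = 40$ have already been verified symbolically via a Gr\"obner basis of $J$ (as noted just before the proposition) provides an independent sanity check that precisely forty witness points should appear at dimension $6$.

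Finally, a corroborating consistency check I would build into the plan: the sixteen dimension-$4$ components and the fifteen dimension-$5$ components, all being of degree $1$, should be \emph{linear} subspaces of $\PP^9$. One can therefore export the witness points and fit linear equations through them, after which the resulting candidate linear subspaces can be verified \emph{symbolically} to lie in $V(J)$ by substitution into the six quintics. This hybrid numerical-then-symbolic verification is standard practice and will be important in the subsequent section, where identifying these extraneous linear components explicitly is presumably what enables the authors to produce the additional three octics in \Cref{mainThm} that cut them out.
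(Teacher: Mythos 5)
Your proposal is essentially identical to the paper's proof: run a numerical irreducible decomposition of $J$ in Bertini and report the witness-set tally, using the symbolically computed dimension and degree of $X$ (and the known dimension and degree of $Z_4$) as a consistency check. The additional detail you give on witness supersets, junk-point filtering, monodromy, the trace test, and the hybrid LLL/symbolic verification of the degree-$1$ components is a faithful expansion of exactly what the paper does in this proof and the discussion that follows it.
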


\begin{proof}
Using Bertini \cite{BHSW06}, we may compute a numerical irreducible decomposition of $J$, which finishes within $30$ minutes. Note that the result is consistent with the Gr\"obner basis of $J$ computed symbolically, which implies that $Z_4$ is the unique top-dimensional component of $X$.
\end{proof}

The data of a numerical irreducible decomposition of a variety (given by defining equations) consists of a collection of witness sets, each representing an irreducible component of the variety, along with linear equations defining a complementary-dimensional slice of each witness set, and the finitely many intersection points of each witness set with its linear slice, thus encoding the dimension and degree of each irreducible component. Since all the lower-dimensional components of $X$ obtained in the numerical irreducible decomposition were themselves linear spaces (being degree 1), one should expect that equations for them can be found and moreover interpreted as certain classes of $4 \times 4$ matrices. This indeed turns out to be the case:

\begin{enumerate}
\item The 4-dimensional components correspond to the 16 ways of specifying that one entry of a $4 \times 4$ doubly stochastic matrix be equal to $1$: this is a codimension $5$ constraint in $\B_4$, as it forces the remaining entries in that row and column to be $0$. For instance, one such component is defined by the ideal $(y_{(2,1)}, y_{(2,2)}, y_{(1,3)}, y_{(3,3)}, y_{(2,3)} - s)$, which (after dehomogenizing $s = 1$) requires that the $(2,3)$-entry is $1$. Up to row and column permutations, such matrices are a direct sum of a $3 \times 3$ matrix with the $1 \times 1$ identity.

\item The 5-dimensional components are slightly more complicated: of these, 6 are contained in the hyperplane at infinity $s = 0$, and thus are irrelevant for the orthostochastic variety. The 9 \textit{relevant} components are as follows: fix a pair of indices $(a,a')$ with $a, a' \in \{2,3,4\}$, and write $\{2,3,4\} \setminus \{a\} =: \{b, c\}, \{2,3,4\} \setminus \{a'\} =: \{b', c'\}$. Then the ideal $(y_{(1,1)} - y_{(a,a')}, y_{(a,1)} - y_{(1,a')}, y_{(b,1)} - y_{(c,a')}, y_{(1,b')} - y_{(a,c')})$ defines a 5-dimensional component of $X$. Note that these components can be described by partitioning a $4 \times 4$ doubly stochastic matrix into 4 disjoint $2 \times 2$ submatrices, and requiring each submatrix to be circulant (i.e. has equal cross terms).
\end{enumerate}

These results were obtained by following the same procedure as in the proof of \Cref{prop-quintics} (sampling points, obtaining approximate linear equations, and then extracting exact linear equations via LLL), and the resulting equations are symbolically verified to define subvarieties of $X$. In the case of the 5-dimensional components, an additional, carefully chosen, change-of-basis was necessary to obtain binomial linear generators, and with it the ensuing description as matrices. The explicit linear ideals are recorded (as \texttt{o22} and \texttt{o24}) in the appendix below.

Each of the relevant lower-dimensional components of $X$ can thus be defined over $\Z$ and interpreted as a particular class of matrices. With these descriptions via integer equations at hand, it is also straightforward to confirm that each of these classes of matrices defines an actual component of $X$, via checking dimensions of tangent spaces. Finally, we may use this to obtain \Cref{mainThm}:

\begin{proof}[Proof of \Cref{mainThm}]
Let $K := (C_{1,2},C_{1,3},C_{2,3})$ be the ideal generated by 3 of the octics, corresponding to (pairs of) the first 3 columns. We claim that $J$ and $K$ together cut out the $4 \times 4$ orthostochastic variety set-theoretically, i.e. $(Z_4)_{s=1} = (X \cap V(K))_{s=1}$ as sets (here $(\underline{\hspace{0.2cm}})_{s=1}$ means dehomogenize with respect to $s$). In view of the numerical irreducible decomposition of $X$ computed in \Cref{prop-NID}, it suffices to show that for each of the relevant lower-dimensional components $C$ of $X$, we have $(C \cap Z_4)_{s=1} = (C \cap V(K))_{s=1}$.

For $Q$ a $4$-dimensional component of $X$, we have that a point $p \in (Q \cap Z_4)_{s=1}$ is (up to row and column permutation) a direct sum of a $1 \times 1$ and a $3 \times 3$ orthostochastic matrix. The case $n = 3$ in \Cref{section3} implies that $Q \cap Z_4$ is defined by a single form of degree $4$, and we check symbolically that $Q \cap V(K)$ is in fact defined by a single octic, which is precisely the square of the quartic defining $Q \cap Z_4$.

For $P$ a relevant $5$-dimensional component of $X$, we again check that $P \cap V(K)$ is defined by a single octic. In this case we do not have equations for $P \cap Z_4$ a priori -- however, the structure of the matrices in $P$ is special enough to make symbolic implicitization feasible. Indeed, we may compute symbolically the kernel of the ring map
\[
F \!\!\!\! \pmod {(I(P)+(s-1))} : \R[y_{(1,1)}, \ldots, y_{(3,3)}]/I(P) \to \R[x_{(1,1)}, \ldots, x_{(4,4)}]/(I_{\OO(4)} + F(I(P)))
\]
which takes roughly 8 minutes -- this is made possible by the fact that after minimizing the presentation, the source is really a polynomial ring in 5 variables. This gives equations for $(P \cap Z_4)_{s=1}$, which turns out to be precisely the dehomogenization of the octic defining $P \cap V(K)$.

Putting this all together, we have that if $l_1, \ldots, l_{16}$ are the $4$-dimensional components of $X$, and $L_1, \ldots, L_{15}$ are the $5$-dimensional components of $X$, where $L_{10}, \ldots, L_{15}$ are contained in the hyperplane $s = 0$, then
\begin{align*}
(X \cap V(K))_{s=1} &= \left( \Big( Z_4 \cup \bigcup_{i=1}^{16} l_i \cup \bigcup_{i=1}^{15} L_i \Big) \cap V(K) \right)_{s=1} \\
&= (Z_4 \cap V(K))_{s=1} \cup \bigcup_{i=1}^{16} (l_i \cap V(K))_{s=1} \cup \bigcup_{j=1}^9 (L_i \cap V(K))_{s=1} \\
&= (Z_4)_{s=1} \cup \bigcup_{i=1}^{16} (l_i \cap Z_4)_{s=1} \cup \bigcup_{j=1}^9 (L_i \cap Z_4)_{s=1} = (Z_4)_{s=1}  \qedhere
\end{align*}
\end{proof}

\section{Conclusion}
A few remarks on the methodology used in \Cref{section4} are in order. First, the only part of the proof of Theorem which relies on numerical computation (i.e. for which there is no symbolic verification) is the correctness of the numerical irreducible decomposition \Cref{prop-NID}. Specifically, what is required is that there are no other irreducible components of $X$ other than those listed in \Cref{prop-NID}. For readers concerned about the random choices involved in path tracking and homotopy continuation, we mention that the numerical irreducible decomposition of $X$ has been calculated multiple times, each time giving the same consistent result. Moreover, the numerical irreducible decomposition of the affine variety $X_{s=1}$ (the dehomogenization of $X$) has also been computed, and is consistent with \Cref{prop-NID} (e.g. among the $5$-dimensional components, only the $9$ relevant components not at infinity are present).

One may also ask about the utility of set-theoretic equations for $Z_4$. After all, given a particular $4 \times 4$ matrix, the brute-force check to determine if it is orthostochastic is still feasible (namely checking all possible sign patterns in a fiber of $\phi$. Since $(\Z/2\Z)^n \oplus (\Z/2\Z)^n$ acts on fibers of $\phi$ by row and column sign changes, one may assume the first row and column are all nonnegative, so there are only $2^{(4-1)^2} = 512$ sign patterns to check). In response to this, we note that in addition to their intrinsic interest, knowledge of set-theoretic equations is extremely useful for many other purposes. For instance, given a variety which meets the set of $4 \times 4$ orthostochastic matrices, one can now describe their common intersection locus (which was previously not possible to do). We plan to use this in a forthcoming upgrade to \cite{CD19}, to compute monic symmetric determinantal representations of hyperbolic plane quartic curves.

As for the equations themselves, it is natural to ask about minimality. For degree reasons, any set of minimal generators of $I(Z_4)$ must include the $6$ quintics $J$ in \Cref{def:quintics}. It has been checked that the spaces of sextics and 7-forms in $I(Z_4)$ are $60$- and $330$-dimensional respectively, and moreover that the quintics in $J$ have no quadratic (or linear) syzygies. It follows that there are no forms of degree $6$ or $7$ in $I(Z_4)$ other than those already in $J$. For degree $8$, it is not difficult to see that $2$ of the naive octics (along with $J$) would not be sufficient to cut out $Z_4$, so the generating set in \Cref{mainThm} is both inclusion-wise and degree-wise minimal.

Finally, we list some questions that remain unanswered with this work. First, it would be interesting to have a combinatorial description of the $6$ quintics $J$, as well as a theoretical proof for why they vanish on $4 \times 4$ orthostochastic matrices. Next, for $n = 4, 5$ it remains open whether the naive equations cut out the orthostochastic variety inside the Birkhoff polytope set-theoretically. 
Lastly, we know that for $n \ge 6$ additional equations are needed beyond the naive ones. However, finding where these come from is related to the first question listed here, and even a computational proof of sufficiency may only be possible with some advances in computing technology.


\section{Acknowledgements}
The first author would like to thank Anton Leykin for helpful discussions, especially related to verifying components via tangent spaces. The second author would like to thank Paul Breiding, Mateusz Michalek and Bernd Sturmfels for their encouragement towards work on this project.

\appendix
\section{M2 session}
The following is an example Macaulay2 session that demonstrates the main results in this paper. For technical reasons, it is more convenient to use $y_0, \ldots, y_9$ as the variables of $\PP^9$ rather than $y_{(1,1)}, \ldots, y_{(3,3)}, s$.

\begin{verbatim}
Macaulay2, version 1.15
i1 : needsPackage "DeterminantalRepresentations"
i2 : needsPackage "NumericalImplicitization"
i3 : S = RR[x_(1,1)..x_(4,4)]; R = QQ[y_0..y_9]; s = y_9;
i6 : A = genericMatrix(S,4,4); IO4 = minors(1, A*transpose A - id_(S^4));
i8 : F = matrix{flatten entries submatrix(hadamard(A,A),{0,1,2},{0,1,2})} | matrix{{1_S}}
-- Sample random points on Z_4
i9 : pts = apply(binomial(9+5,5), i -> sub(F,matrix{flatten entries randomOrthogonal(4,RR)}));
-- Compute dimension of deg 5 part of I(Z_4): takes ~30 seconds
i10 : HF5 = numericalHilbertFunction(F, IO4, pts, 5, UseSLP => true, Precondition => false)
-- Get integer quintics via LLL: takes ~30 hours
i11 : time E = extractImageEquations(HF5, AttemptZZ => true);
-- Alternatively, after downloading the auxiliary files, one can load the equations with:
-- i11 : E = first lines get "ortho4-quintics.txt";
i12 : J = ideal value toString E;
-- Create the s-homogenization of a generic 4x4 doubly stochastic matrix
i13 : V = genericMatrix(R,3,3); V = transpose(V || matrix{toList(3:s) - sum entries V})
i15 : V = V || matrix{toList(4:s) - sum entries V}
-- 3 naive octics C_{1,2}, C_{1,3}, C_{2,3}
i16 : K = ideal apply(subsets(3, 2), p -> ( 
        (i,j) = (p#0, p#1);
        ((V_(i,0)*V_(j,0) + V_(i,1)*V_(j,1) - V_(i,2)*V_(j,2) -
        V_(i,3)*V_(j,3))^2 - 4*V_(i,0)*V_(j,0)*V_(i,1)*V_(j,1) -
        4*V_(i,2)*V_(j,2)*V_(i,3)*V_(j,3))^2 -
        64*V_(i,0)*V_(j,0)*V_(i,1)*V_(j,1)*V_(i,2)*V_(j,2)*V_(i,3)*V_(j,3)
      )); -- cf. equation (3.5) in [7], esp. the coefficient of 64
i17 : time numericalIrreducibleDecomposition(J, Software => BERTINI) -- takes ~25 minutes
-- Note: the dimensions shown here are of the affine cones, 
-- which are 1 more than that of the corresponding projective variety
i18 : (X, eps) = (oo, 1e-10);
-- These define some helper functions to obtain descriptions of linear components
i19 : realPartMatrix := A -> matrix apply(entries A, r -> r/realPart)
i20 : imPartMatrix := A -> matrix apply(entries A, r -> r/imaginaryPart)
i21 : getLinEqs = (A, mons, c, n) -> (
        B = random(RR)*realPartMatrix A + random(RR)*imPartMatrix A;
        C = matrix apply(entries B, r -> r/(e -> lift(round(10^(1+n)*round(n, e)), ZZ)));
        mons*submatrix(LLL(id_(ZZ^(numcols C))||C), toList(0..<numcols mons), toList(0..<c))
      )
-- The following are the 4-dim (= codim 5) components of X, recorded in o22
i22 : L5 = apply(X#5, W -> ideal getLinEqs(clean(eps, matrix W#Points#0), basis(1,R), 5, 10))
o22 = {ideal(y_0, y_3, y_6, -y_2-y_5-y_8+y_9, -y_1-y_4-y_7+y_9),
      ideal(y_0, y_2, y_4, y_7, -y_1+y_9),
      ideal(y_2, y_5, y_6, y_7, -y_8+y_9),
      ideal(y_2, y_3, y_4, y_8, -y_5+y_9),
      ideal(y_1, y_2, y_3, y_6, -y_0+y_9),
      ideal(y_0, y_1, y_2, -y_6-y_7-y_8+y_9, -y_3-y_4-y_5+y_9),
      ideal(y_0+y_3+y_6-y_9,y_0+y_1+y_2-y_9,y_6+y_7+y_8-y_9,y_1+y_4+y_7-y_9,y_2+y_5+y_8-y_9),
      ideal(y_0, y_3, y_7, y_8, -y_6+y_9),
      ideal(y_6, y_7, y_8, -y_3-y_4-y_5+y_9, -y_0-y_1-y_2+y_9),
      ideal(y_1, y_3, y_5, y_7, -y_4+y_9),
      ideal(y_0, y_4, y_5, y_6, -y_3+y_9),
      ideal(y_3, y_4, y_5, -y_6-y_7-y_8+y_9, -y_0-y_1-y_2+y_9),
      ideal(y_0, y_1, y_5, y_8, -y_2+y_9),
      ideal(y_1, y_4, y_7, -y_0-y_3-y_6+y_9, y_2+y_5+y_8-y_9),
      ideal(y_2, y_5, y_8, -y_0-y_3-y_6+y_9, y_1+y_4+y_7-y_9),
      ideal(y_1, y_4, y_6, y_8, -y_7+y_9)}
-- Check: description as direct sums of 3x3 ++ 1x1 matrices, Q \cap Z_4 = Q \cap V(K)
i23 : all((0,0)..(3,3), p -> (
      (i,j) = p; 
      L = ideal(V_(i,j) - s, V_((i+1)%4,j), V_((i+2)%4,j), V_(i,(j+1)%4), V_(i,(j+2)%4));
      any(L5, l -> l == L) and isSubset(J, L) and (
        Q = minimalPresentation sub(K, R/L);
        T = ring Q;
        f = (T_4^2 - T_4*(T_0+T_1+T_2+T_3) + (T_0*T_3+T_1*T_2))^2 - 4*T_0*T_1*T_2*T_3;
        ideal(f^2) == Q
      )))
-- The following are the 5-dim (= codim 4) components of X, recorded in o24
i24 : L4 = apply(X#6, W -> ideal getLinEqs(clean(eps, matrix W#Points#0), basis(1,R), 4, 10))
o24 = {ideal(-y_1+y_6, -y_0+y_7, -y_1-y_3-y_4-y_7+y_9, y_2-y_3-y_4+y_8),
      ideal(-y_2+y_6, -y_0+y_8, y_1-y_3-y_5+y_7, -y_1-y_2-y_7-y_8+y_9),
      ideal(-y_1+y_3, -y_0+y_4, -y_2-y_5+y_6+y_7, -y_2-y_3-y_4-y_5+y_9),
      ideal(y_9, y_0+y_1, y_3+y_4, y_6+y_7),
      ideal(-y_4+y_6, -y_3+y_7, -y_0-y_1+y_5+y_8, -y_3-y_5-y_6-y_8+y_9),
      ideal(y_9, y_0+y_6, y_1+y_7, y_2+y_8),
      ideal(-y_5+y_6, -y_3+y_8, -y_0-y_2-y_5-y_8+y_9, -y_0-y_2+y_4+y_7),
      ideal(-y_2+y_4, -y_1+y_5, -y_0-y_3+y_7+y_8, -y_1-y_2-y_7-y_8+y_9),
      ideal(y_9, y_3+y_6, y_4+y_7, y_5+y_8),
      ideal(-y_5+y_7, -y_4+y_8, -y_1-y_2+y_3+y_6, -y_1-y_2-y_5-y_8+y_9),
      ideal(-y_2+y_3, -y_0+y_5, -y_1-y_4+y_6+y_8, -y_2-y_5-y_6-y_8+y_9),
      ideal(y_9, y_1+y_2, y_4+y_5, y_7+y_8),
      ideal(-y_2+y_7, -y_1+y_8, y_0-y_4-y_5+y_6, -y_1-y_2-y_4-y_5+y_9),
      ideal(y_9, y_0+y_2, y_3+y_5, y_6+y_8),
      ideal(y_9, y_0+y_3, y_1+y_4, y_2+y_5)}
-- Remove 6 components at infinity
i25 : L4 = select(L4, l -> s % l != 0); #L4
o26 = 9
-- Check: description as block 2x2 circulant matrices, and containment in X
i27 : all((1,1)..(3,3), p -> ( 
        (a,a') = p; 
        (b,c) = toSequence({1,2,3} - set{a});
        (b',c') = toSequence({1,2,3} - set{a'});
        L = ideal(V_(0,0)-V_(a,a'),V_(a,0)-V_(0,a'),V_(b,0)-V_(c,a'),V_(0,b')-V_(a,c'));
        any(L4, l -> l == L) and isSubset(J, L)
      ))
-- Work over QQ for exact implicitization
i28 : S = QQ[x_(1,1)..x_(4,4)]; A = genericMatrix(S,4,4);
i30 : IO4 = minors(1, A*transpose A - id_(S^4));
i31 : H = matrix{flatten entries submatrix(hadamard(A,A),{0,1,2},{0,1,2})}; 
i32 : F = H | matrix{{1_S}}
-- Check: (P \cap Z_4)_{y_9=1} = (P \cap V(K))_{y_9=1}
i33 : all(L4, I -> (
        P = minimalPresentation sub(K + ideal(s - 1), R/I);
        time ker map(S/(IO4 + sub(I, F)), ring P, H_((gens ring P)/baseName/last)) == P
      )) -- takes ~9*8 minutes
-- Check: J is contained in ker F
i34 : sub(gens J, F) % IO4 == 0
-- Check: J has no quadratic syzygies
i35 : syz(gens J, DegreeLimit => 7) == 0
-- To compute the dimension of the space of 7-forms in I(Z_4): 
-- repeat lines i3 - i10, replacing everywhere 5 with 7 (takes ~7 hours, ~34 GB RAM)
\end{verbatim}
\end{document}